\newtheorem{theorem}{Theorem}[section]
\newtheorem{lemma}[theorem]{Lemma}
\newtheorem{corollary}[theorem]{Corollary}
\def\aiOrcid{\hspace*{.4mm}\includegraphics[scale=.5]{orcid_16x16.png}}
\definecolor{orcidlogocol}{HTML}{A6CE39}
\def\orcidID#1{\href{https://orcid.org/#1}{\textcolor{orcidlogocol}{\aiOrcid} }}
\definecolor{darkblue}{rgb}{0.0, 0.0, 0}
\newcommand{\sq}[1]{\ensuremath \sqrt{#1}} 
\begin{document}
\title{An improved bound on the burning number of graphs}\thanks{The authors were supported by NSERC}
\author[A.\ Bonato]{Anthony Bonato}
\author[S.\ Kamali]{Shahin Kamali}
\address[A1]{Department of Mathematics, Ryerson University, Toronto, Canada.}
\address[A2]{Department of Computer Science, University of Manitoba, Winnipeg, Canada.}
\email[A1]{(A1) abonato@ryerson.ca}
\email[A2]{(A2) shahin.kamali@umanitoba.ca}

\begin{abstract}
The burning number conjecture states that the burning number of a connected graph is at most $\lceil \sqrt{n} \rceil.$ While the conjecture is unresolved, Land and Lu proved that the burning number of a connected graph is at most  $ \sqrt{(3/2)n}+O(1).$  Using an algorithmic approach, we provide an improved upper bound for the burning number of a connected graph: $$\bigg \lceil \frac{\sqrt{12n+64}+8}{3} \bigg \rceil = \sqrt{(4/3)n} +O(1) .$$
\end{abstract}

\keywords{Localization game, localization number, capture time, trees, pathwidth, projective planes, treewidth}
\subjclass{05C05, 05C99}

\maketitle

\section{Introduction}\label{intro}
	
Graph burning is a simplified model for the spread of influence in a network. Associated with the process is a parameter, the burning number, which quantifies the speed at which the influence spreads to ever vertex. The smaller the burning number is, the faster an influence can be spread in the network. Graph burning is defined as follows.  Given a finite, simple, undirected graph $G$, the burning process on $G$ is a discrete-time process. Vertices may be either unburned or burned throughout the process. Initially, in round $t=0$ all vertices are unburned. At each round $t \geq 1$, one new unburned vertex is chosen to burn, if such a vertex is available. We call such a chosen vertex a \emph{source}. If a vertex is
burned, then it remains in that state until the end of the process. Once a vertex is burned in round $t$, in round $t+1$ each of its unburned neighbors becomes burned. The process ends in a given round when all
vertices of $G$ are burned. We emphasize that sources are chosen in each round for which they are available.

The burning number corresponds to an optimal choice of sources throughout the process. The \emph{burning number} of a graph $G$, denoted by $b(G)$, is the minimum number of rounds needed for the process to end. The parameter $b(G)$ is well-defined, as in a finite graph, there are only finitely many choices for the sources. The sources that are chosen over time are referred to as a \emph{burning sequence}; a shortest such sequence is called \emph{optimal}. Hence, optimal burning sequences have length $b(G).$ Since graph burning was first introduced in \cite{BJR0,BJR,thez}, a number of results, conjectures, and algorithms have been emerged on the topic. See \cite{abburn} for a recent survey of graph burning.

The \emph{Burning Number Conjecture} (or \emph{BNC}), which first appeared in \cite{BJR}, states that for a connected graph $G$ of order $n$, $$b(G)\leq \lceil {n}^{1/2}\rceil.$$
The BNC remains one of the most difficult problems on graph burning. If the BNC holds, then it is tight as paths of order $n$ have burning number $\lceil {n}^{1/2}\rceil.$ By the Tree Reduction Theorem (see \cite{BJR}), the BNC holds if it holds for trees. Note that we require $G$ to be connected here, as otherwise, the burning number can be as large as $|V(G)|,$ as in the case for a graph with no edges.

The BNC has resisted attempts at its resolution, although various upper bounds on the burning number are known. In \cite{bessy2}, it was proved that for every connected graph $G$ of order $n$ and every $0<\epsilon<1$, $$b(G)\leq \sqrt{\frac{32n}{19(1-\epsilon)}}+\sqrt{\frac{27}{19\epsilon}}$$ and
$$b(G)\leq \sqrt{\frac{12n}{7}}+3\approx 1.309 \sqrt{n}+3.$$ These bounds were improved in \cite{LL}, who proved, up until this paper, the best known upper bound:
$$b(G)\le \bigg \lceil
\frac{\sqrt{24n+33}-3}{4} \bigg \rceil = \sqrt{(3/2)n} +O(1) \approx  1.2247\sqrt{n} + O(1).$$
While the BNC is open for general graphs, it known to hold for a number of graph classes.  A \emph{spider} is a tree with at most one vertex of degree at least $3.$ A \emph{caterpillar} is a tree where deleting all vertices of degree 1 leaves a path. As proven in \cite{bl}, spiders satisfy the conjecture. As proven in \cite{liu1} and independently in \cite{hil}, caterpillars satisfy the conjecture. In \cite{kam}, it was proven that any graph with minimum degree $\delta \ge 23$ satisfy the conjecture. Although this result encompasses a large class of graphs, it omits the class of trees.

In the present paper, we provide an improved upper bound $$b(G) \le \bigg \lceil \frac{\sqrt{12n+64}+8}{3} \bigg \rceil = \sqrt{(4/3)n} +O(1) \approx 1.1547\sqrt{n} + O(1).$$ See Theorem~\ref{th:main1} and Corollary~\ref{cor:main1}. The results in the present paper provides the best known upper bound on the burning number.

All graphs we consider are simple, finite, and undirected. The distance between vertices $u$ and $v$ is denoted by $d(u,v).$ If $G$ is a graph and $u$ is a vertex of $G$, then the \emph{eccentricity} of $u$ is defined as $\max\{d(u, v): v\in V(G)\}$. The \emph{radius} of $G$ is the minimum eccentricity over the set of all vertices in $G$. The subgraph of $G$ obtained by removing a subset $S$ from $V(G)$ is denoted $G - S$. When $S$ contains a single vertex or edge, say $x$, we write $G - x$. For background on graph theory, see \cite{west}.

\section{Improved upper bound}

We introduce an algorithmic proof to show any connected graph $G$ of order $n$ can be burned within $ \sqrt{4n/3} +O(1)$ rounds. Our algorithm fixes an arbitrary spanning tree $T$ of $G$ and burns $T$ in $\sqrt{4n/3} +O(1)$ rounds. By the time $T$ is burned, all vertices of $G$ are also burned.
	
Given a tree $T$, a burning sequence of $T$ of length $k$ is equivalent to partitioning $T$ into $k$ (possibly empty) trees $T_{k-1}, T_{k-2}, \ldots, T_0$, where $T_j$ is formed by vertices burned by the $(k-j)$th source; see~\cite{BJR}. That is, $T_j$ has radius of at most $j$ and can be burned by the fire started at the $(k-j)$th source.
	
Assume we are given a set $R = \{k-1, k-2, \ldots, 0\}$ of radii, and we want to decide whether it is possible to partition an input tree $T$ of order $n$ to subtrees with distinct radii that are bounded by elements of $R$. One elementary approach works by repeatedly extracting trees from $T$ and updating $R$ in the following manner.

\begin{enumerate}
\item Select an arbitrary vertex of $T$ as the root and let the $u$ be the vertex at the deepest level of the rooted tree.
\item Select the largest value $r \in R$, and let $T_r$ be the subtree of $T$ rooted at the $r$th ancestor of $u$ (or the entire $T$ if such ancestor does not exist).
\item Set $T = T - T_r$ and $R = R-r$ and repeat the same process until $T$ becomes empty.
\end{enumerate}

Note that $T_r$ has radius at most $r;$ that is, $T_r$ can be covered by a ball of radius $r$. Given that $T_r$ has order at least $r+1$, the total order of all trees after $k$ iterations of steps (1-3) will be at least $k+ (k-1) + \cdots+1 = k(k+1)/2$. Therefore, as long as $k \geq \frac{\sq{8n+1}+1}{2} = \sqrt{2n} +O(1)$, the extracted trees cover $T$, and we derive a burning sequence of length $k.$
In the $(k-j)$th iteration of the above algorithm, a tree of radius at most $r=j$ and of order at least $j+1$ is extracted. The value of $j$ is initially $k-1$ and is decremented in each iteration of the algorithm.

Our algorithm is built around analogous ideas as described in the previous paragraph, except that in its $(k-j)$th iteration, a tree $T^*$ of radius $r^* \leq j$ and of order at least $r^*+\lfloor j/2 \rfloor -3$ is extracted. As before, $j$ is initially $k-1$ and is decremented in each iteration of the algorithm. The extra $\lfloor j/2 \rfloor -3$ vertices that are burned in the $(k-j)$th iteration ensure a faster completion of the burning process compared to the elementary scheme described above.

Before proving the main result, we need the following lemma.
	
	\begin{lemma}\label{lem:burn}
		Given any value $\beta \in (0,1)$ and a set $R$ of distinct non-negative integers $r_1<r_2<\ldots<r_{j}$, there
		exists an $r^* \in R$ such that $$ (1-\beta)j \leq r^* \leq r_j - \beta j+3.$$
	\end{lemma}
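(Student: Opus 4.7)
The plan is to pick $r^*$ to be the smallest element of $R$ that already meets the lower threshold, and then exploit the fact that the $r_i$ are distinct non-negative integers to squeeze the upper bound. Specifically, let $i^*$ be the smallest index in $\{1,\ldots,j\}$ for which $r_{i^*} \ge (1-\beta)j$, and set $r^* = r_{i^*}$. The lower inequality $r^* \ge (1-\beta)j$ is then immediate by the choice of $i^*$, so the whole content of the lemma is showing the upper inequality $r^* \le r_j - \beta j + 3$.

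For the upper bound I would use two standard consequences of the $r_i$ being strictly increasing non-negative integers. First, since $r_{i^*+1}, \ldots, r_j$ are $j - i^*$ distinct integers all exceeding $r_{i^*}$, we obtain $r^* \le r_j - (j - i^*)$. Second, $r_i \ge i - 1$ for every $i$. Thus it suffices to prove $i^* \le (1-\beta)j + O(1)$, which then yields $r^* \le r_j - j + (1-\beta)j + O(1) = r_j - \beta j + O(1)$.

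The bound on $i^*$ I would get from the minimality of $i^*$ together with the integer lower bound on $r_{i^*-1}$. In the trivial case $i^* = 1$, the bound $r^* \le r_j - j + 1$ is already strong enough. Otherwise, minimality gives $r_{i^*-1} < (1-\beta)j$, while $r_{i^*-1} \ge i^* - 2$; combining these gives $i^* - 2 < (1-\beta)j$, i.e.\ $i^* \le (1-\beta)j + 2$. Substituting into the inequality from the previous paragraph yields $r^* \le r_j - \beta j + 2 \le r_j - \beta j + 3$, as required.

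The main obstacle is really just the degenerate regime where no $r_i$ at all reaches the threshold $(1-\beta)j$; from $r_j \ge j-1$ this can happen only when $\beta j < 1$, a tiny boundary case where the additive $+3$ slack in the statement provides enough room to either verify the inequality directly or rule the case out as vacuous. Apart from that, the argument is a one-shot pigeonhole-style step whose only subtlety is keeping track of the off-by-one constants, which is why the stated upper bound is $r_j - \beta j + 3$ rather than the slightly sharper $r_j - \beta j + 2$ that the clean argument actually produces.
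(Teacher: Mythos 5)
Your argument takes a genuinely different route from the paper's. The paper proves the lemma by pure counting: at most $\lfloor j-\beta j\rfloor+1$ elements of $R$ lie below the threshold $(1-\beta)j$ and at most $\lfloor \beta j\rfloor-2$ lie above $r_j-\beta j+3$, so fewer than $j$ elements are excluded and at least one survives. You instead exhibit $r^*$ explicitly as the smallest $r_{i^*}$ meeting the lower threshold and combine the two facts $r_j\ge r_{i^*}+(j-i^*)$ and $r_i\ge i-1$. Your main case is airtight, arguably cleaner than the paper's, and actually delivers the sharper constant $+2$ in place of $+3$.

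The one place you wave your hands, however, cannot be patched. If no $r_i$ reaches $(1-\beta)j$, then (as you observe) $r_j=j-1$ and $\beta j<1$; but in that regime the conclusion of the lemma genuinely fails, because the required \emph{lower} bound $(1-\beta)j$ exceeds $r_j=\max R$, and the $+3$ slack sits on the wrong side of the inequality to help. Concretely, for $j=2$, $\beta=1/4$, $R=\{0,1\}$ one would need $r^*\ge 1.5$, which is impossible; so the case is neither verifiable directly nor vacuous. You should know that the paper's own proof breaks in exactly the same regime: its bound $\lfloor\beta j\rfloor-2$ on the number of elements above $r_j-\beta j+3$ is negative there while the true count is $0$, so the claimed total $j-1$ of excluded elements becomes $j$. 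The lemma is thus only correct under an implicit hypothesis such as $\beta j\ge 1$, and this needs to be checked where the lemma is invoked in Case~2 of Theorem~\ref{th:main1} (there $\beta=1/2-c$, and $\beta j$ can drop below $1$ when $|V(T_v)|$ is close to $r_j+j/2$). Modulo this boundary issue, which you share with the authors but at least had the honesty to flag, your proof is complete and correct.
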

	
	\begin{proof}
		There are at most $\lfloor j - \beta j \rfloor + 1$ members of $R$ that are smaller than $j -\beta j$, and at most $\lfloor \beta j\rfloor-2$ members of $R$ that are larger than $r_j - \beta j+3$. In total, at most $\left( \lfloor j - \beta j \rfloor + 1 \right) + \left( \lfloor \beta j\rfloor-2 \right)  \leq j-1$ members of $R$ are outside of the desired range, and at least one member of $i$ is inside the range.
	\end{proof}
	
	For example, when $\beta = 0.4$ and $R=\{0,1,\ldots,9\}$, we have $j=10$, $r_j=9$, which gives $j-\beta j = 6$, and $r_j-\beta j+3 = 8$. The desired range has three members: $r^*\in\{6,7,8\}$.

We now come to our main result.
	
	\begin{theorem}\label{th:main1}
If $T$ is a tree of order $n,$ then
$$b(T) \le \bigg \lceil \frac{\sqrt{12n+64}+8}{3} \bigg \rceil = \sqrt{4n/3} + O(1).$$
	\end{theorem}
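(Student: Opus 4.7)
Set $k = \lceil (\sqrt{12n+64}+8)/3 \rceil$. The plan is to build, iteratively, a partition of $V(T)$ into $k$ parts, each inducing a subtree of $T$ with a distinct radius in $\{0, 1, \ldots, k-1\}$. Such a partition yields a burning sequence of length $k$ by starting a fire at the center of the radius-$r$ part in round $k - r$.

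To drive the construction, I would maintain a set $R \subseteq \{0, 1, \ldots, k-1\}$ of still-available radii, initialized to the full set. In each iteration I would root the current $T$ at its center, pick a deepest vertex $u$, invoke Lemma~\ref{lem:burn} with $\beta = 1/2$ on $R$ to select an $r^* \in R$ from the middle of the available range satisfying
\[
\tfrac{|R|}{2} \;\le\; r^* \;\le\; r_{\max}(R) - \tfrac{|R|}{2} + 3,
\]
and extract the tree $T^*$ equal to the ball in $T$ of radius $r^*$ around the $r^*$-th ancestor $v$ of $u$ (or all of $T$ in the degenerate case $\mathrm{rad}(T) \le r^*$). I then delete $T^*$ from $T$ and remove the relevant radius from $R$, so that the parameter $j$ from the paragraph above decrements correctly from $k-1$ down to $0$.

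The heart of the matter, and the main obstacle, is proving
\[
|T^*| \;\ge\; r^* + \lfloor j/2 \rfloor - 3
\]
at every iteration. My plan is a case split on $D := \mathrm{rad}(T)$. When $D \ge 2r^*$, the ball around $v$ contains a path of length $2r^*$ through $v$ (from the $r^*$-th ancestor of $v$ down through $v$ to $u$), accounting for $2r^*+1$ vertices. When $r^* < D < 2r^*$, I would use that the chosen root is the center of $T$, hence lies on a diameter path and has at least two incident branches of depth at least $D-1$; the ball around $v$ then captures the $r^*+1$ vertices on the $v$-to-$u$ path, the $D-r^*$ vertices on the path from $v$ up to the center, and a further $2r^*-D$ vertices extending into an opposing deep branch, still giving at least $2r^*+1$ in total. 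When $D \le r^*$, all of $T$ is extracted and the algorithm terminates early, trivially accounting for the remaining vertices. Combined with the Lemma's guarantee $r^* \ge (j+1)/2$, the inequality $2r^*+1 \ge r^* + \lfloor j/2 \rfloor - 3$ is then immediate in each non-terminating case.

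The final step is to sum the bound over iterations. Using that the $j$ index ranges over $\{0, 1, \ldots, k-1\}$ and keeping $r^*$ close to its forced value $\ge (j+1)/2$, the contributions combine, after careful tracking of floors and constants, to at least $\tfrac{3}{4} k^2 - O(k)$; this exceeds $n$ precisely when $k \ge \lceil (\sqrt{12n+64}+8)/3 \rceil$, i.e., when $(3k-8)^2 \ge 12n + 64$. I expect the trickiest bookkeeping to be matching the constants in this quadratic exactly and handling the small-$j$ boundary cases where the range guaranteed by Lemma~\ref{lem:burn} collapses, which I would absorb into the additive $O(1)$ term in the statement.
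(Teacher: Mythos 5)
Your high-level plan matches the paper's: iteratively peel off a subtree of radius at most some $r^*\in R$ containing at least $r^*+\lfloor j/2\rfloor-3$ vertices, then sum. But the way you extract the subtree creates a genuine gap. You remove the \emph{ball} $B(v,r^*)$, which contains ancestors of $v$ and vertices in sibling branches; deleting such a set from a tree leaves a \emph{forest}, not a tree. Your per-iteration argument (root at the center, find two branches of depth at least $D-1$, walk up through the center and down an opposing branch) is only valid for a connected tree, so the invariant collapses after the first iteration. Worse, your ``terminate early'' case $D\le r^*$ no longer terminates anything: it only swallows one \emph{component}, which may have far fewer than $r^*+\lfloor j/2\rfloor-3$ vertices while still consuming a radius forced by Lemma~\ref{lem:burn} to be at least $|R|/2$. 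A spider-like tree already produces this situation: one ball extraction through the center leaves many disjoint path components, each of which must burn a large radius for a small vertex count, and the summation $\sum_p|V(T_p)|\ge \frac{3k^2-16k}{4}$ fails. A sanity check that something must be wrong: your claimed bound $|T^*|\ge 2r^*+1$ in every non-terminating iteration, summed over all iterations (each radius in $\{0,\dots,k-1\}$ used once), would give $\sum(2r^*+1)=k^2$, i.e., $b(T)\le\sqrt{n}+O(1)$ --- essentially the Burning Number Conjecture.

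The paper avoids both problems by only ever extracting a \emph{rooted} (downward-closed) subtree $T_p$, so that $T-T_p$ is again a tree with the same root, and the deletion never strands small components. The price is that Condition~I (radius at most $r^*$) is no longer automatic, which is where the real work lies: the paper first tests whether the subtree $T_v$ rooted at the $r_j$-th ancestor of the deepest vertex is already ``bushy'' ($|V(T_v)|\ge r_j+\lfloor j/2\rfloor$, Case~1), and otherwise measures its bushiness $c<1/2$ and applies Lemma~\ref{lem:burn} with the \emph{adaptive} parameter $\beta=1/2-c$ rather than your fixed $\beta=1/2$. That adaptive choice is exactly what makes the diameter bound $h+cj\le 2r^*$ work for the downward-closed subtree rooted at the $(r^*+j/2)$-th ancestor of $u$ in Subcase~2.b. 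If you want to repair your write-up, you should replace the ball extraction by rooted-subtree extraction and reintroduce the dependence of $\beta$ on $c$; as written, the argument does not go through.
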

	
	\begin{proof}
		
Throughout, we assume $T$ is rooted at an arbitrary vertex. Let $R$ be the set of radii that is initially $\{0,1, \ldots, k-1\}$. 	Let $j$ be an integer that iterates from $k-1$ to $0$.
		
		At each iteration $j$, we select a radius $r^* \in R$ and also a vertex $p\in V(T)$ so that the following two conditions hold for the tree $T_p$ rooted at $p.$

\medskip

\noindent \emph{Condition I:} The radius of $T_p$ is at most $r^*$.

\noindent \emph{Condition II:} $|V(T_p)| \geq r^*+\lfloor j/2 \rfloor -3$.

\medskip
		
		At the end of iteration $j$, we update $R$ to $R-\{r^*\}$ and $T$ to $T- T_p$ and repeat the same process in the next iteration.
		Assume we are in the beginning of the $(k-j)$th iteration of the algorithm. Suppose $R = \{r_1, r_2, \ldots, r_j\}$ at the beginning of the iteration, where $r_1 < r_2 < \ldots < r_j$.

		We will show that it is possible select $r^* \in R$ and $p\in V(T)$ so that the above two conditions hold. Let $u$ be the deepest vertex in $T$, and $v$ be the ancestor of $u$ at distance $r_j$ of $u$. Consider the subtree of $T$ rooted at $v$, denoted by $T_{v}$. 
		We consider the following cases.
		
\medskip

		\textbf{Case 1.} Suppose $ |V(T_v)| \geq r_j + \lfloor j/2 \rfloor$. In this case, we let $p = v$ and $r^* = r_j$. Note that $T_p$ (which is $T_v$) has radius at most $r_j = r^*$ (because $u$ is the deepest vertex) and Condition 1 holds. Further, by the assumption of this case, we have $|V(T_p)| = |V(T_v)| \geq r_j  + \lfloor j/2 \rfloor$, and Condition II also holds. Case 1 follows.

\medskip
		
		\textbf{Case 2.} Suppose $|V(T_v)| < r_j+\lfloor j/2 \rfloor$. Therefore, the order of $T_{v}$ is $r_j+cj$ for some $c <1/2$. By the definition of $v$, there are $r_j+1$ vertices on the path $P_{uv}$ between $u$ and $v$.
		This implies that less than $cj$ vertices of $T_v$ are not located on $P_{uv}$.

\medskip		

		We apply Lemma~\ref{lem:burn} with $\beta =1/2-c$ to select $r^* \in R$ such that
		\begin{equation}
			(1/2+c)j \leq r^* \leq r_j - (1/2-c)j+3. \label{eq:cond}
		\end{equation}
		
To choose $p$, we consider the following subcases depending on the value of $r^*.$

\medskip
		
		\textbf{Subcase 2.a.} Suppose that $r^* > r_j - j/2$.

\medskip

In this case, we again let $p = v$. The diameter of $T_p$ is at most $$r_j + cj < (r^* + j/2) + cj \leq 2r^*.$$ The last inequality follows from the lower bound for $r^*$ in (\ref{eq:cond}). Given that the diameter of $T_p$ is at most $2r^*$, its radius is at most $r^*$ and Condition I holds.
		The order of $T_p$ is $$r_j + cj \geq (r^*+(1/2-c)j-3) + cj  = r^* + j/2-3,$$ and Condition 2 also holds. The first inequality follows from the upper bound for $r^*$ in (\ref{eq:cond}). Subcase 2.a\ follows.

\medskip	
		
		\textbf{Subcase 2.b.} Suppose that $r^* \leq r_j -j/2$.

\medskip

We let $p$ to be the $h$th ancestor of $u$, where  $h = r^*+j/2$. Given that $r^*+j/2 \leq r_j$, we know that $p \in T_v$.
We have that $T_p$ has order at least $r^* + j/2$ and Condition II holds. We will show that $T_p$ has diameter at most $2r^*$. Let $(x,y)$ be the furthest pair of vertices in $T_p$ and $a$ and $b$ be the least common ancestors of $(x,u)$ and $(y,u)$, respectively. Without loss of generality, we assume the height of $a$ is no larger than that of $b$. See Figure~\ref{onre}.
\begin{figure}[!h]
  \centering
  \includegraphics{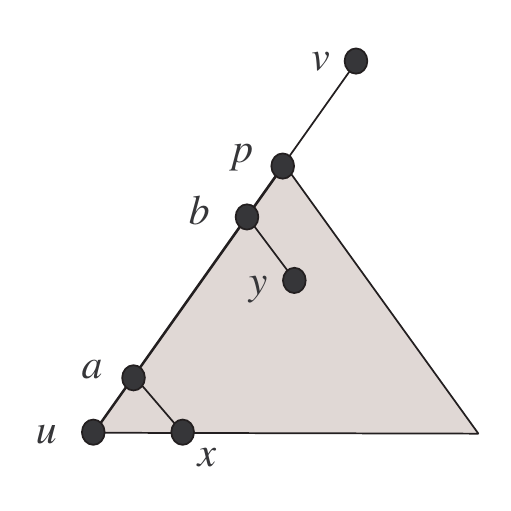}
  \caption{The subtree rooted at $p.$} \label{onre}
\end{figure}

We then have the following:
				\begin{eqnarray*}
					d(x,y) &\leq & d(x,a) + d(a,b) + d(b,y)  \\
					& \leq & d(u,a) + d(a,b) + d(b,y)  \\
					& = & d(u,b) + d(b,y)   \\
					& \leq & h + d(b,y) \\
					& \leq & h + cj  \\
					& = & r^*+j/2 + cj   \\
					& \leq & 2r^*.
				\end{eqnarray*}
The first inequality follows by the triangle inequality, the second follows since $u$ is the deepest vertex, the third inequality follows as $b \in V(T_p)$ has height $h,$ the fourth inequality follows since less than $cj$ vertices in $T_v$ are not in $P_{uv},$ and the final inequality follows by (\ref{eq:cond}). The first equality follows since $a$ and $b$ are ancestors of $u,$ and the second inequality follows by the definition of $h.$

We have established that the diameter of $T_p$ is at most $2r^*.$ Hence, the radius of $T_p$ is at most $r^*$ and Condition I also holds. Subcase 2.b follows.
		
We have therefore shown that, for a tree $T$ and any set of radii $R$, it is always possible to extract a subtree $T_p$ of $T$ with radius at most $r^* \in R$ and of order at least $r^*+\lfloor j/2 \rfloor -3$.
		Recall that $R$ is initially $\{0, 1, \ldots, k-1\}$, and $j$ iterates from $k-1$ to $0$. Therefore, after $k$ iterations, the number of vertices in all extracted trees will be at least:
		\begin{eqnarray*}
			\sum_p |V(T_p)|  & \geq  &\sum\limits_{r^*\in R} r^* + \sum\limits_{j=0}^{k-1} (\lfloor j/2 \rfloor -3) \\
			& = & \sum_{i=1}^{k-1} i + \sum_{j=1}^{k-1} \lfloor j/2 \rfloor -3k \\
            & \ge  & \sum_{i=1}^{k-1} i + \sum_{j=1}^{k-1}  j/2 - k/4  -3k \\
			&  \geq &\frac{k(k-1)}{2}  + \frac{k(k-1)}{4} -13k/4\\
            &= & \frac{3k^2-16k}{4}.
		\end{eqnarray*}
In the third line, when we remove the floors, at most $k/2$ terms are impacted and each become smaller by 1/2. Hence, in total, removing floors makes the sum larger by at most $k/4.$

As long as $n \leq \frac{3k^2-16k}{4} $, or equivalently, $k \geq \lceil \frac{\sqrt{12n+64}+8}{3} \rceil $, our algorithm partitions $T$ into subtrees with radii that are bounded by distinct values from $R = \{0,1, \ldots, k-1\}$. A tree whose radii is bounded by $r \in R$ is then burned with the $(k-r)$th source, and the burning process finishes within $k$ rounds.
	\end{proof}
	
The burning number of a connected graph $G$ equals the burning number of a spanning tree of $G;$ see \cite{BJR}. Hence, we derive the following corollary, which establishes the best known bound on the burning number of a connected graph.
	
	\begin{corollary}\label{cor:main1}
If $G$ is a connected graph of order $n, $ then
$$b(G) \le \bigg \lceil \frac{\sqrt{12n+64}+8}{3} \bigg \rceil = \sqrt{4n/3} + O(1).$$
	\end{corollary}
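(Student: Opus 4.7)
The plan is to deduce the corollary directly from Theorem \ref{th:main1} via a spanning tree reduction, the same principle invoked in the introduction when reducing the Burning Number Conjecture to trees. Given a connected graph $G$ of order $n$, I would fix an arbitrary spanning tree $T$ of $G$; note that $T$ has the same vertex set as $G$ and therefore order $n$; and argue that $b(G) \le b(T)$. The latter inequality holds because augmenting a tree with additional edges can only accelerate fire propagation: if a burning sequence burns every vertex of $T$ by round $k$, then running the same sequence on $G$ burns every vertex of $G$ by round $k$ as well, since at each round the set of burned vertices in $G$ contains that in $T$ (the fire in $G$ uses a superset of the available edges). Equivalently, one may appeal to the sharper statement $b(G) = b(T')$ for some spanning tree $T'$, cited from \cite{BJR} in the sentence preceding the corollary.

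Combining $b(G) \le b(T)$ with Theorem \ref{th:main1} applied to the $n$-vertex tree $T$ yields
$$b(G) \;\le\; b(T) \;\le\; \left\lceil \frac{\sqrt{12n+64}+8}{3} \right\rceil,$$
which is the claimed bound. There is no substantive obstacle in this step — the analytical work has already been done in Theorem \ref{th:main1}, and this corollary is effectively its packaging for general connected graphs. The only subtlety worth flagging is that one must not confuse this simple monotonicity (fewer edges make burning harder) with the nontrivial direction of the cited equality $b(G)=b(T')$; fortunately, the easy direction suffices for the upper bound being claimed.
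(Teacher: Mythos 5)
Your proposal is correct and matches the paper's own argument: the paper likewise deduces the corollary from Theorem~\ref{th:main1} by passing to a spanning tree of $G$ and using the spanning-tree reduction cited from \cite{BJR}. Your additional direct justification of the easy inequality $b(G)\le b(T)$ via edge monotonicity is sound and only makes the step more self-contained.
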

	
\bibliographystyle{plain}

\end{document}